\newtheorem{theorem}{Theorem}[section]
\newtheorem{proposition}[theorem]{Proposition}
\theoremstyle{definition}
\theoremstyle{remark}
\numberwithin{equation}{section}
\newcommand{\set}[1]{\left\{#1\right\}}
\newcommand{\comb}[2]{\left( \begin{array}{c} #1 \\ #2 \end{array} \right)}
\newcommand{\R}{\mathbb R}
\newcommand{\C}{\mathbb C}
\newcommand{\HH}{{\mathcal H}}
\newcommand{\hyper}[2]{{}_{_{#1}}F_{_{#2}}}
\begin{document}

\title[]{On the spectrum of the transfer operators of a one-parameter family with intermittency transition}%
\author{S. Ben Ammou} 
\address{Faculty of Science, Computational Mathematics Laboratory, University of Monastir, Monastir 5000, Tunisia}
\author{C. Bonanno} 
\address{Dipartimento di Matematica, Universit\`a di Pisa, Pisa, Italy}
\email{bonanno@dm.unipi.it}
\author{I. Chouari}
\address{Faculty of Science, Computational Mathematics Laboratory, University of Monastir, Monastir 5000, Tunisia}
\author{S. Isola}
\address{Dipartimento di Matematica e Informatica, Universit\`a di Camerino, Camerino (MC), Italy}
\thanks{The second author is partially supported by ``Gruppo Nazionale per l'Analisi Matematica, la Probabilit\`a e le loro Applicazioni (GNAMPA)'' of Istituto Nazionale di Alta Matematica (INdAM), Italy.}
\maketitle

\begin{abstract}
We study the transfer operators for a family $F_r:[0,1] \to [0,1]$ depending on the parameter $r\in [0,1]$, which interpolates between the tent map and the Farey map. In particular, considering the action of the transfer operator on a suitable Hilbert space, we can define a family of infinite matrices associated to the operators and study their spectrum by numerical methods.
\end{abstract}


\section{Introduction} 
Let $F:[0,1]\to [0,1]$ be the \emph{Farey map} defined by
\begin{equation} \label{farey}
F(x)=\left\{
\begin{array}{ll}
\frac{x}{1-x} & \mbox{if }\ 0\le x\le \frac{1}{2}\\[0.3cm]
\frac{1-x}{x} & \mbox{if }\ \frac{1}{2} \le x \le 1
\end{array} \right.
\end{equation}
From the ergodic point of view it is of interest since it is expanding everywhere but at the fixed point $x=0$ where it has slope one. This make this map a simple model of the physical phenomenon of \emph{intermittency} \cite{PM}. Moreover, the Farey map is related to the \emph{Farey fractions} and encodes the continued fraction algorithm (see \cite{BI1}).

An effective tool in the study of the dynamics induced by a map on the interval is provided by the \emph{transfer operator} associated to the map (see \cite{Ba} for an overview), a linear operator whose spectrum on a suitable space of functions gives information about the existence of invariant measures, ergodicity and mixing. For expanding maps it turns out that the transfer operator is quasi-compact when restricted to a space of sufficiently regular functions, hence the spectrum is made of isolated eigenvalues with finite multiplicity and the essential part, a disc of radius strictly smaller than the spectral radius. Instead for intermittent maps as the Farey map, even when restricted to $C^\infty$ functions the essential spectrum of the transfer operator is equal to the whole spectrum. In particular the ergodic properties of the map cannot be deduced by this approach. 

The spectrum of a family of transfer operators of the Farey map has been studied in \cite{pap1,BGI,Is} acting on a suitable Hilbert space $\HH$ of holomorphic functions. The operators studied in these papers are self-adjoint and positive, and it turns out that the spectrum consists of the interval $[0,1]$ plus an isolated real eigenvalue.

Another approach to the properties of the transfer operator of the Farey map has been introduced in \cite{GI}. It has been first noticed that the \emph{Minkowski question mark function} conjugates $F$ with the \emph{tent map} $T:[0,1]\to [0,1]$ defined by
\begin{equation} \label{tent}
T(x)=\left\{
\begin{array}{ll}
2x & \mbox{if }\ 0\le x\le \frac{1}{2}\\[0.3cm]
2(1-x)& \mbox{if }\ \frac{1}{2} \le x \le 1
\end{array} \right.
\end{equation}
Then the authors have introduced a one-parameter family $F_r:[0,1]\to [0,1]$ of expanding maps with $r\in [0,1]$, interpolating between $T$ and $F$, namely $F_0=T$ and $F_1=F$. The family $F_r$ is defined by
\begin{equation} \label{farey-r}
F_{r}(x)=\left\{
\begin{array}{ll}
\frac{(2-r)x}{1-rx} & \mbox{if }\ 0\le x\le \frac{1}{2}\\[0.3cm]
\frac{(2-r)(1-x)}{1-r+rx} & \mbox{if }\ \frac{1}{2} \le x \le 1
\end{array} \right.
\end{equation}
A thermodynamic approach to the properties maps $F_r$ has been considered in \cite{DIK}, and we also refer to \cite{kess} for a recent study of this family.

In \cite{GI}, the authors have studied the transfer operators $\mathcal{P}_{r}$ for the family $F_r$ acting on the Hilbert space $\HH$. It turns out that $\mathcal{P}_{r}$ is of the trace-class for all $r\in [0,1)$ and the trace can be analytically computed. Moreover they have discussed the possibility that the spectral properties of $\mathcal{P}_{1}$, namely the transfer operator for the Farey map, are approximated by those of $\mathcal{P}_{r}$ letting $r\to 1^-$.

In this paper we use the matrix approach that has been introduced in \cite{pap1} to study the operators $\mathcal{P}_{r}$,  and discuss possible insights about the spectral properties of $\mathcal{P}_{1}$.

\section{Transfer operators} 

The transfer operator $\mathcal{P}_r$ associated to the map $F_{r}$ acts on functions $f:[0,1]\to \C$ as
\[
(\mathcal{P}_r f)(x) := \sum_{y\, :\, F_r(y)=x}\, \frac{f(y)}{|F'_r(y)|}
\]
which using \eqref{farey-r} becomes
\begin{equation} \label{pr}
(\mathcal{P}_r f)(x)=(\mathcal{P}_{r,0}f+\mathcal{P}_{r,1}f)(x)
\end{equation}
with
\begin{equation}  \label{p0p1}
(\mathcal{P}_{r,0}f)(x)=\frac{\rho}{(\rho+rx)^{2}}\, f\Big(\frac{x}{\rho+rx}\Big)\quad \text{and} \quad (\mathcal{P}_{r,1}f)(x)=\frac{\rho}{(\rho+rx)^{2}}\, f\Big(1-\frac{x}{\rho+rx}\Big)
\end{equation} 
where $\rho:= 2-r$, a notation that will be used in the rest of the paper.

The operator $\mathcal{P}_{1}$ for the Farey map $F$ has been studied in \cite{pap1,BGI} on the Hilbert space $\HH$ of holomorphic functions defined as
\begin{equation}\label{spazio}
\HH:= \set{f:[0,1]\to \C\, :\, f=\mathcal{B}[\varphi]\ \text{for some } \varphi\in L^{2}(m)}
\end{equation}
where $\mathcal{B}[\cdot]$ denotes the generalized Borel transform
\begin{equation} \label{trans}
(\mathcal{B}[\varphi])(x):= \frac{1}{x^{2}}\int_{0}^{\infty}e^{-\frac{t}{x}}\, e^{t}\, \varphi(t)\, dm(t)\, ,
\end{equation}
and $L^{2}(m):= L^2(\R^+,m)$ where $m$ is the measure on $\R^+$
\[
dm(t)=te^{-t}dt.
\]
The space $\HH$ is endowed with the inner product inherited by the inner product on $L^2(m)$ through the $\mathcal{B}$-transform, that is
\begin{equation}\label{inn}
(f_{1},f_{2})_{\HH}:=\int_{0}^{\infty}\varphi_{1}(t)\, \overline{\varphi_{2}(t)}\,dm(t)\qquad \text{if}\quad f_{i}=\mathcal{B}[\varphi_{i}]\, .
\end{equation}
In \cite{GI}, the authors have studied the operators $\mathcal{P}_{r}$ on $\HH$ for all $r\in [0,1)$ and have proved many properties that we collect in the following theorem.

\begin{theorem}[\cite{GI}] \label{teo-gi}
For all $r\in [0,1)$ the space $\HH$ is invariant for $\mathcal{P}_{r}$ and
\[
\mathcal{P}_r \, \mathcal{B}[\varphi]=\mathcal{B}[(M_{r}+N_{r})\varphi]
\]
for all $\varphi \in L^2(\R^+,m)$, where $M_{r},N_r :L^2(m) \rightarrow L^2(m)$ are defined as
\begin{equation}\label{trad}
(M_{r}\varphi)(t)=\frac{1}{\rho}\, e^{-\frac{r}{\rho}t}\, \varphi\Big(\frac{t}{\rho}\Big) \qquad \text{and} \qquad (N_{r}\varphi)(t)=\frac{1}{\rho}\, e^{\frac{1-r}{\rho}t}\, \int_{0}^{\infty}J_{1}\left(2\sqrt{st/\rho}\right)\, \sqrt{\frac{\rho}{st}}\ \varphi(s)\, dm(s)
\end{equation}
where $J_{q}$ denotes the Bessel function of order $q$. Moreover
\begin{enumerate}[(i)]
\item the function
\[
g_r(x) = \frac{1}{1-r+rx} = \mathcal{B}\left[ \frac{1-e^{-\frac{r}{1-r}t}}{rt} \right](x)\ \in \HH
\]
satisfies $\mathcal{P}_r g_r = g_r$, hence it is the density of an absolutely continuous invariant measure for $F_r$;
\item the operators $M_r$ and $N_r$ on $\HH$ are of trace-class, hence the same holds for $\mathcal{P}_{r}$;
\item the spectra of $M_r$ and $N_r$ contain only simple eigenvalues, in particular
\[
\text{sp}(M_r) = \{0\} \cup \left\{ \rho^{-k} \right\}_{k\ge 1}\quad \text{and} \quad \text{sp}(N_r) =\{0\} \cup \left\{ (-1)^{k-1}\left( \frac{4\rho}{(1+\sqrt{1+4\rho})^{2}}\right)^{k} \right\}_{k\ge 1}\, ;
\]
\item the trace of $\mathcal{P}_{r}$ can be explicitly computed and is given by
\[
\text{trace}\,(\mathcal{P}_r) =\frac{1}{1-r}+\frac{\sqrt{1+4\rho}-1}{2\sqrt{1+4\rho}}\, .
\]
\end{enumerate}
\end{theorem}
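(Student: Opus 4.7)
The plan is to prove the intertwining identity $\mathcal{P}_r\mathcal{B}[\varphi]=\mathcal{B}[(M_r+N_r)\varphi]$ by direct substitution in the Borel-transform definition \eqref{trans}, and then deduce parts (i)--(iv) from the explicit expressions for $M_r$ and $N_r$ on $L^{2}(m)$. In the first branch of \eqref{p0p1}, with $y=x/(\rho+rx)$, the Jacobian factor $\rho/(\rho+rx)^{2}$ cancels the $1/y^{2}$ inside $\mathcal{B}[\varphi](y)$ to give a $1/x^{2}$ prefactor; the exponential splits as $e^{-t(\rho+rx)/x}=e^{-t\rho/x}e^{-rt}$, and after rescaling $t\mapsto t/\rho$ the integrand becomes exactly $\mathcal{B}[M_r\varphi](x)$. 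In the second branch the preimage equals $(\rho-(1-r)x)/(\rho+rx)$ and the resulting exponential does not factor in $(t,x)$; the Bessel kernel appears via the Laplace identity
\[
\int_{0}^{\infty}e^{-ps}\sqrt{s}\,J_{1}(2\sqrt{as})\,ds=\frac{\sqrt{a}}{p^{2}}\,e^{-a/p},
\]
which is verified by termwise integration of the series expansion of $J_{1}$ and allows one to rewrite the single exponential as an integral against $J_{1}(2\sqrt{st/\rho})\sqrt{\rho/(st)}$, identifying the outcome with $\mathcal{B}[N_r\varphi](x)$.

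For (i), the identity $\mathcal{P}_r g_r=g_r$ for $g_r(x)=1/(1-r+rx)$ is a short algebraic manipulation from \eqref{pr}--\eqref{p0p1} using $\rho-1=1-r$; the representation as a Borel transform follows from computing $\mathcal{B}[(1-e^{-rt/(1-r)})/(rt)]$ via Fubini after writing $(1-e^{-at})/t=\int_{0}^{a}e^{-tu}\,du$.

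For (ii) and (iii), the spectrum of $M_r$ is computed by the explicit ansatz $\varphi_{k}(t):=t^{k-1}e^{-rt/(1-r)}$: a direct computation using $\rho=2-r$ yields $M_r\varphi_{k}=\rho^{-k}\varphi_{k}$, and $\varphi_{k}\in L^{2}(m)$ for $r\in[0,1)$; moreover, in the monomial basis $\{t^{k-1}\}_{k\ge 1}$ the matrix of $M_r$ is lower-triangular with diagonal $\{\rho^{-k}\}$, which gives both trace-class membership and $\text{sp}(M_r)=\{0\}\cup\{\rho^{-k}\}_{k\ge 1}$. For $N_r$, inserting the power series $J_{1}(2\sqrt{z})/\sqrt{z}=\sum_{k\ge 0}(-1)^{k}z^{k}/(k!(k+1)!)$ realises $N_r$ as a superposition of rank-one operators of the form $(e^{(1-r)t/\rho}t^{k})\otimes s^{k}$, and the eigenvalue equation reduces to an infinite linear recursion for the expansion coefficients of $\varphi$ in $\{t^{k}e^{(1-r)t/\rho}\}_{k\ge 0}$; the characteristic root $\xi:=4\rho/(1+\sqrt{1+4\rho})^{2}$ then produces the alternating geometric eigenvalues $(-1)^{k-1}\xi^{k}$. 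The main obstacle is the explicit construction of the eigenvectors of $N_r$ and the verification that no other nonzero eigenvalues occur; this is the delicate step, which relies on identities of Laguerre type.

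Finally, (iv) follows from $\text{trace}(\mathcal{P}_r)=\text{trace}(M_r)+\text{trace}(N_r)$, since $\mathcal{B}$ conjugates $\mathcal{P}_r$ to $M_r+N_r$. Summing the eigenvalues in (iii) gives $\text{trace}(M_r)=\sum_{k\ge 1}\rho^{-k}=1/(\rho-1)=1/(1-r)$. The simplification $\xi=(\sqrt{1+4\rho}-1)/(\sqrt{1+4\rho}+1)$, coming from $4\rho=(\sqrt{1+4\rho}-1)(\sqrt{1+4\rho}+1)$, evaluates the alternating geometric sum as $\xi/(1+\xi)=(\sqrt{1+4\rho}-1)/(2\sqrt{1+4\rho})$, yielding the trace formula.
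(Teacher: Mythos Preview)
The paper does not actually prove Theorem~\ref{teo-gi}: it is quoted verbatim from \cite{GI} and no argument is given here.  So there is no ``paper's own proof'' to compare against, and your sketch is best read as an independent attempt to reproduce the cited result.

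Your treatment of the intertwining relation is correct: for $\mathcal{P}_{r,0}$ the cancellation of the Jacobian with $1/y^{2}$ and the rescaling $t\mapsto t/\rho$ do give $\mathcal{B}[M_r\varphi]$ exactly as you write, and the Laplace identity you quote for $J_{1}$ is the right device to identify $\mathcal{P}_{r,1}\mathcal{B}[\varphi]$ with $\mathcal{B}[N_r\varphi]$.  Part~(i) and the trace computation in~(iv) are also fine; your simplification $\xi=(\sqrt{1+4\rho}-1)/(\sqrt{1+4\rho}+1)$ and the alternating geometric sum are correct.

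The genuine gap is in~(ii) and~(iii).  Your argument for $M_r$ reads: ``in the monomial basis $\{t^{k-1}\}_{k\ge 1}$ the matrix of $M_r$ is lower-triangular with diagonal $\{\rho^{-k}\}$, which gives both trace-class membership and the spectrum.''  This does not work as stated.  The monomials are not an orthonormal basis of $L^{2}(m)$, so triangularity of the formal matrix does not imply trace-class, and it does not by itself rule out further spectrum in $L^{2}(m)$.  You do exhibit the eigenfunctions $\varphi_{k}(t)=t^{k-1}e^{-rt/(1-r)}$ and correctly check $M_r\varphi_k=\rho^{-k}\varphi_k$ and $\varphi_k\in L^2(m)$, but completeness of this system and the trace-class property require a separate argument.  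For $N_r$ you already flag that the eigenvector construction and the exclusion of other eigenvalues are ``the delicate step'', and indeed nothing in your sketch establishes either.  Consequently your derivation of~(iv) as ``sum of the eigenvalues found in~(iii)'' is circular: Lidskii's theorem needs the operator to be trace-class first, and in \cite{GI} the trace is obtained directly from nuclearity on a space of holomorphic functions (via the fixed points of the two inverse branches of $F_r$), not by summing a list of eigenvalues obtained beforehand.  To close the gap you should either import that nuclearity argument or prove trace-class on $L^{2}(m)$ by an independent estimate (e.g.\ a Hilbert--Schmidt bound on a square root), and then use the Fredholm determinant to pin down the full spectra of $M_r$ and $N_r$.
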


\section{The matrix approach} 

As shown in \cite{BGI}, the Hilbert space $L^2(m)$ admits a complete orthogonal system $\{e_n\}_{n\ge 0}$ given by the Laguerre polynomials defined as
\begin{equation} \label{laguerre}
e_n(t) := \sum_{m=0}^n\, \comb{n+1}{n-m}\, \frac{(-t)^m}{m!}
\end{equation}
which satisfy 
\[
(e_{n},e_{n})=\frac{\Gamma(n+2)}{n!} = n+1
\]
for all $n\ge 0$. Hence, using Theorem \ref{teo-gi}, we can study the action of $\mathcal{P}_r$ on $\HH$ by the action on $L^2(m)$ of an infinite matrix representing the operators $P_r:=M_r+N_r$ defined in \eqref{trad} for the basis $\{e_n\}_{n\ge 0}$. That is for any $\phi\in L^{2}(m)$, we can write
\[
\phi(t)=\sum_{n=0}^{\infty}\phi_{n}e_{n}(t) \quad \text{with} \quad \phi_n = \frac{1}{n+1}\, (\phi,e_{n})
\] 
hence $\phi$ is an eigenfunction of $P_{r}$ with eigenvalue $\lambda$ if and only if
\[
(P_{r}\phi,e_{k})=\lambda\, (\phi,e_{k})=\lambda\, (k+1)\, \phi_{k} \qquad \forall \; k\geq 0
\]
Using the notation $c_{kn}^{r}:= (P_{r}e_{n},e_{k})$ we obtain that 
\begin{equation} \label{eig-mat}
P_{r}\phi=\lambda \phi \quad \Leftrightarrow\quad C_{r}\phi=\lambda D\phi \quad \Leftrightarrow\quad A_{r}\phi=\lambda \phi
\end{equation}
where $C_{r}$ and $D$ are given by 
\[
C_{r}=(c_{kn}^{r})_{k,n\geq 0}\quad \text{and} \quad D=\text{diag}(k+1)_{k\ge 0}
\]
and $A_{r}$ is the infinite matrix
\begin{equation}\label{ar}
A_{r}= (a_{kn}^{r})_{k,n\geq 0}\qquad \text{with} \quad a_{kn}^r = \frac{c_{kn}^{r}}{k+1}\, .
\end{equation}
We now use the definitions \eqref{trad} of the operators $M_r$ and $N_r$ to compute
\begin{proposition}\label{calcoli}
For all $r\in (0,1)$ we have
\[
\begin{aligned}
a_{kn}^r = \ & \comb{n+k+1}{n}\, \frac{(2-r)r^{k}}{2^{n+k+2}}\, \hyper{2}{1}\left(-k,-n;-k-n-1,\frac{2(r-1)}{r}\right)+\\[0.2cm]
&+\sum_{l=0}^{n}(-1)^{l}\, \comb{n+1}{n-l}\, \comb{l+k+1}{l}\, \frac{(2-r)r^{k}}{2^{l+k+2}}\, \hyper{2}{1}\left(-k,-l;-k-l-1,\frac{2(r-1)}{r}\right)
\end{aligned}
\]
for all $k,n\ge 0$, where $\hyper{2}{1}$ denotes the hypergeometric function.
\end{proposition}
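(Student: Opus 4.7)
My plan is to treat the two summands in the proposition separately, matching them with $(M_re_n,e_k)/(k+1)$ and $(N_re_n,e_k)/(k+1)$ respectively via the splitting $P_r=M_r+N_r$. The first identification is obtained by computing a bivariate generating function; the second one is reduced to the first through a classical Laguerre identity.

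For $M_r$, I would form $G(z,w):=\sum_{n,k\ge0}(M_re_n,e_k)\,z^nw^k$. Combining the Laguerre generating function $\sum_n e_n(t)z^n=(1-z)^{-2}e^{-tz/(1-z)}$ with the definition of $M_r$ in \eqref{trad} and the simplification $1+r/\rho=2/\rho$, the elementary integral in $t$ produces
\[
G(z,w)=\frac{\rho}{\bigl[2-z-w(r+z(1-r))\bigr]^{2}}.
\]
Writing the bracket as $2(1-Y/2)$ with $Y:=z+wr+zw(1-r)$ and expanding $(1-Y/2)^{-2}=\sum_p(p+1)(Y/2)^p$, the multinomial theorem shows that only the term with multinomial indices $(n-c,k-c,c)$, where $c:=n+k-p$, contributes to $[z^nw^k]$. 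Reindexing by $c$ gives
\[
(M_re_n,e_k)=\frac{\rho}{2^{n+k+2}}\sum_{c=0}^{\min(n,k)}\frac{(n+k+1-c)!}{(n-c)!(k-c)!\,c!}\,2^c\,r^{k-c}(1-r)^c.
\]
Factoring out $r^k$ and writing $2(1-r)/r=-2(r-1)/r$ matches the sum term-by-term with the expansion of $\hyper{2}{1}(-k,-n;-k-n-1;2(r-1)/r)$; using $(n+k+1)!/(n!\,k!)=(k+1)\comb{n+k+1}{n}$ and dividing by $k+1$ produces exactly the first term of the proposition.

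For $N_r$, I would derive the closed form $(N_re_n)(t)=t^n e^{-rt/\rho}/(n!\,\rho^{n+1})$. Inserting $J_1(2\sqrt{st/\rho})\sqrt{\rho/(st)}=\sum_j(-st/\rho)^j/[j!(j+1)!]$ into \eqref{trad} and exchanging summation and integration reduces the problem to the moments $(s^j,e_n)_{L^2(m)}$, which vanish for $j<n$ by orthogonality; for $j\ge n$, Rodrigues' formula for $L_n^{(1)}$ together with $n$-fold integration by parts yields $(s^j,e_n)=(-1)^n j!(j+1)!/[n!(j-n)!]$. After shifting $j=n+p$ the factorials collapse, the residual series sums to $e^{-t/\rho}$, and the prefactor $e^{(1-r)t/\rho}$ simplifies to give the stated monomial times exponential. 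To bridge to the $M_r$ computation I would invoke the Laguerre identity
\[
\sum_{l=0}^n(-1)^l\comb{n+1}{n-l}\,e_l(u)=\frac{u^n}{n!},
\]
itself a one-line consequence of the generating function under the substitution $z=-u/(1-u)$. Applying it at $u=t/\rho$ and multiplying by $e^{-rt/\rho}/\rho$ shows $(N_re_n)(t)=\sum_{l=0}^n(-1)^l\comb{n+1}{n-l}(M_re_l)(t)$; taking the inner product with $e_k$, dividing by $k+1$, and substituting the $M_r$ formula (with $n$ replaced by $l$) reproduces the second sum of the proposition, completing the proof.

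The main obstacle I anticipate is identifying the sum extracted from $G(z,w)$ as the particular $\hyper{2}{1}$ with argument $2(r-1)/r$: this relies on a non-obvious factorization of $r^k$ together with the sign flip $2(1-r)/r=-2(r-1)/r$, and one must also carefully track the factor $(k+1)$ hidden inside $\comb{n+k+1}{n}=(n+k+1)!/[n!(k+1)!]$, which cancels the $(e_k,e_k)=k+1$ produced by the inner product.
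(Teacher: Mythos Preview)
Your proposal is correct; the argument goes through as you outline it. Both your proof and the paper's reach the same structural intermediate step
\[
(N_r e_n,e_k)=\sum_{l=0}^{n}(-1)^{l}\binom{n+1}{n-l}(M_r e_l,e_k),
\]
but the tools used on either side of that step differ. For the $M_r$ integral the paper simply quotes the table formula \cite[7.414(4)]{GR}
\[
\int_0^{\infty} t\,e^{-\alpha t}\,e_n(\beta t)\,e_k(\gamma t)\,dt
=\frac{(n+k+1)!}{n!\,k!}\,\frac{(\alpha-\beta)^n(\alpha-\gamma)^k}{\alpha^{n+k+2}}\;
\hyper{2}{1}\!\Big(-k,-n;-k-n-1;\tfrac{\alpha(\alpha-\beta-\gamma)}{(\alpha-\beta)(\alpha-\gamma)}\Big),
\]
whereas you rederive this via the bivariate generating function $G(z,w)=\rho\big[2-z-w(r+z(1-r))\big]^{-2}$ and a multinomial expansion. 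For the $N_r$ reduction the paper expands $e_n(s)$ into monomials and then uses the Bessel integral \cite[6.643(4)]{GR} to turn each monomial back into an $e_l(t/\rho)$; you instead evaluate $(N_re_n)(t)=t^n e^{-rt/\rho}/(n!\,\rho^{n+1})$ in closed form and then appeal to the finite Laguerre inversion $\sum_{l}(-1)^l\binom{n+1}{n-l}e_l(u)=u^n/n!$. Your route is more self-contained---no integral tables are needed, and the remarkably simple form of $G(z,w)$ is itself informative---while the paper's route is shorter on the page because the hypergeometric identification is delivered directly by the cited formula rather than having to be extracted from the multinomial sum. The ``main obstacle'' you anticipate (tracking the sign flip $2(1-r)/r=-2(r-1)/r$ and the hidden factor $k+1$) is real but purely bookkeeping; your sketch handles it correctly.
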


\begin{proof}
From the definition of the $a_{kn}^r$ in \eqref{ar}, we first have to compute the terms $c_{kn}^r = (P_{r}e_{n},e_{k})$. We have
\[
c_{kn}^{r}=((M_{r}+N_{r})e_{n},e_{k}) =\int_{0}^{+\infty}(M_{r}e_{n})(t)e_{k}(t)dm(t)+\int_{0}^{+\infty}(N_{r}e_{n})(t)e_{k}(t)dm(t)
\]
and compute the two integrals separately. For the first we find
\[
\begin{aligned}
& \int_{0}^{+\infty}(M_{r}e_{n})(t)e_{k}(t)dm(t) = \int_{0}^{+\infty}\frac{1}{\rho}\, e^{-\frac{r}{\rho}t}\, e_{n}\left(\frac{t}{\rho}\right)\, e_{k}(t)\, te^{-t}dt=\frac{1}{\rho}\, \int_{0}^{+\infty}e^{-t(\frac{r}{\rho}+1)}\, t\, e_{n}\left(\frac{t}{\rho}\right)\, e_{k}(t)dt =\\[0.2cm]
&=\frac{1}{\rho}\frac{\Gamma(n+k+2)\, (\frac{r}{\rho}+1-\frac{1}{\rho})^{n}\, (\frac{r}{\rho}+1-1)^{k}}{k!\, n!\, (\frac{r}{\rho}+1)^{n+k+2}}\hyper{2}{1}\left(-k,-n;-k-n-1,\frac{(\frac{r}{\rho}+1)(\frac{r}{\rho}+1-\frac{1}{\rho}-1)}{(\frac{r}{\rho}+1-\frac{1}{\rho})(\frac{r}{\rho}+1-1)}\right) =\\[0.2cm]
&= \frac{\Gamma(n+k+2)}{k!\, n!}\, \frac{(2-r)r^{k}}{2^{n+k+2}}\, \hyper{2}{1}\left(-k,-n;-k-n-1,\frac{2(r-1)}{r}\right)
\end{aligned}
\]
where in the second line we have used \cite[equation 7.414 (4), p. 809]{GR}.

For the second integral we use \eqref{trad} and the polynomial expression \eqref{laguerre} for $e_n$ to write
\[
\begin{aligned}
& \int_{0}^{+\infty}(N_{r}e_{n})(t)\, e_{k}(t)\, dm(t) =\int_{0}^{+\infty}\frac{1}{\rho}\, e^{\frac{1-r}{\rho}t}\biggl[\int_{0}^{+\infty}\frac{J_{1}(2\sqrt{st/\rho})}{\sqrt{st/\rho}}\, e_{n}(s)\, dm(s) \biggr] e_{k}(t)\, te^{-t}dt= \\[0.2cm]
&=\int_{0}^{+\infty}\frac{1}{\rho}\, e^{\frac{1-r}{\rho}t}\biggl[\int_{0}^{+\infty}\frac{J_{1}(2\sqrt{st/\rho})}{\sqrt{st/\rho}}\sum_{l=0}^{n}\, \comb{n+1}{n-l}\, \frac{(-s)^{l}}{l!}\, dm(s) \biggr]e_{k}(t)\, te^{-t}dt= \\[0.2cm]
&=\int_{0}^{+\infty}\frac{1}{\rho}\, e^{\frac{1-r}{\rho}t}\biggl[\sum_{l=0}^{n}\comb{n+1}{n-l}\, \frac{(-1)^{l}}{l!}\left(\frac{\rho}{t}\right)^{l+2}\, \int_{0}^{+\infty}J_{1}(2\sqrt{u})\, u^{l+\frac 12}\, e^{-\frac{\rho}{t} u}\, du \biggr] e_{k}(t)\, te^{-t}dt \\[0.2cm]
\end{aligned}
\]
where in the last line we have made the change of variable $u= \frac t \rho\, s$. Using now \cite[equation 6.643 (4), p. 709]{GR} to write
\[
\int_{0}^{+\infty}J_{1}(2\sqrt{u})u^{l+1/2}e^{-\frac{\rho }{t}u}\, du = l!\, e^{-\frac t \rho}\, \left(\frac{\rho}{t}\right)^{-l-2}\, e_l\left(\frac t \rho\right)\, ,
\]
we obtain
\[
\begin{aligned}
&\int_{0}^{+\infty}(N_{r}e_{n})(t)\, e_{k}(t)\, dm(t) =\int_{0}^{+\infty}\frac{1}{\rho}\, e^{\frac{1-r}{\rho}t}\biggl[\sum_{l=0}^{n}\comb{n+1}{n-l}\, (-1)^{l}\, e^{-\frac{t}{\rho}}\, e_{l}\left(\frac{t}{\rho}\right) \biggr] e_{k}(t)\, te^{-t}dt = \\[0.2cm]
&=\sum_{l=0}^{n}\, (-1)^{l}\, \comb{n+1}{n-l}\, \frac{1}{\rho}\, \int_{0}^{+\infty}t\, e^{-\frac{2}{\rho}t}\, e_{l}\left(\frac{t}{\rho}\right)\,  e_{k}(t)\, dt =\\[0.2cm] 
&=\sum_{l=0}^{n}\, (-1)^{l}\, \comb{n+1}{n-l}\, \frac{1}{\rho}\, \frac{\Gamma(l+k+2)}{l!\, k!}\frac{(\frac{1}{\rho})^{l}\, (\frac{2}{\rho}-1)^{k}}{(\frac{2}{\rho})^{l+k+2}} \, \hyper{2}{1}\left(-k,-l;-k-l-1,\frac{\frac{2}{\rho}(\frac{1}{\rho}-1)}{\frac{1}{\rho}\, (\frac{2}{\rho}-1)}\right)= \\[0.2cm]
&=\sum_{l=0}^{n}\, (-1)^{l}\, \comb{n+1}{n-l}\, \frac{\Gamma(l+k+2)}{l!\, k!}\frac{(2-r)r^{k}}{2^{l+k+2}}\, \hyper{2}{1}\left(-k,-l;-k-l-1,\frac{2(r-1)}{r}\right)
\end{aligned}
\]
where in the third line we have used again \cite[equation 7.414 (4), p. 809]{GR}.

The proof is finished by adding the two integrals and dividing by $k+1$.
\end{proof}

We can now numerically approximate solutions to \eqref{eig-mat} using the standard north-west corner approximation of the matrices $A_r$. For $N\ge 1$ we let $A_{r,N}$ denote the $N\times N$ matrix defined as
\[
A_{r,N}= (a_{kn}^{r,N})_{k,n=0,\dots,N-1}\qquad \text{with} \quad a_{kn}^{r,N} = a_{kn}^{r}\, .
\]
For each $r\in (0,1)$ we find $N$ eigenvalues for $A_{r,N}$ which approximate the eigenvalues of $P_r = M_r + N_r$, whose spectrum consists only of the point spectrum as stated in Theorem \ref{teo-gi}. In Figure \ref{fig-eig} we have plotted the eigenvalues of $A_{r,N}$ for $N=50$ as functions of $r\in (0,1)$. 

\begin{figure}[h]
\begin{center}
\includegraphics[width=10.0cm,keepaspectratio]{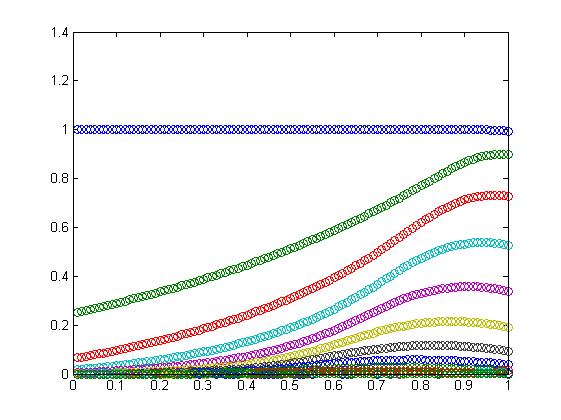}
\caption{The eigenvalues of $A_{r,N}$ for $N=50$ as functions of $r\in (0,1)$.}
\label{fig-eig}
\end{center}
\end{figure}

First of all we notice that for all $r$ we find $\lambda=1$ as leading eigenvalue. This is expected since by Theorem \ref{teo-gi}(i) for all $r$ the operator $\mathcal{P}_r$ admits an eigenfunction $g_r$ with eigenvalue $1$, which corresponds to the invariant measure for the map $F_r$. Second we conclude from the results that the approximation of the eigenvalues of $P_r$ gets worse and worse as $r$ approaches 1. This is evident in the leading eigenvalue, which is a curve very close to 1, but slightly decreasing as $r\to 1^-$, and is probably the reason for the other curves to be non-increasing as $r$ approaches 1.

Then we can ask how good is the approximation given by the eigenvalues in Figure \ref{fig-eig}. To quantify the goodness of the approximation we can use the only analytical result about the spectrum of $P_r$, namely the computation of its trace given in Theorem \ref{teo-gi}(iv). We have plotted in Figure \ref{convergence}(a) the function given in Theorem \ref{teo-gi}(iv), which is the upper most curve diverging as $r\to 1^-$, and the traces of the matrices $A_{r,N}$ as functions of $r\in (0,1)$ for different values of $N$. In particular we have chosen $N=10,20,30,40,50,60$. We see that as $N$ increases we get a better and better approximation of the trace of $P_r$, and the two almost coincide for $r<0.6$. However as $r$ approaches 1, we see that the approximation of the trace becomes poor, and in particular for $N=60$, the small oscillations in the curve show numerical instabilities in the computations.
   
Hence we believe that the computation of the eigenvalues of $A_{r,N}$ in Figure \ref{fig-eig} is a very good approximation of the eigenvalues of $P_r$ for $r<0.6$. For example for $r=0$, the eigenvalues of $P_0$ are the set $\set{2^{-2k}}_{k\ge 0} \cup \{0\}$ (see e.g. \cite[Proposition 4.7]{GI}), and these values coincide for what we find in Figure \ref{fig-eig}. It is instead unclear what happens as $r$ approaches 1.

To try to understand this point we have plotted in Figure \ref{convergence}(b) the eigenvalues of Figure \ref{fig-eig} together with the sums of the eigenvalues of $M_r$ and $N_r$ for $k=1,3,5,7,9$ find in Theorem \ref{teo-gi}(iii), that is the curves
\[
\rho^{-k} + (-1)^{k-1}\left( \frac{4\rho}{(1+\sqrt{1+4\rho})^{2}}\right)^{k}\, , \qquad k=1,3,5,7,9\, ,
\]
which coincide with the first five eigenvalues of $P_r$ for $r=0$. The behavior of the trace of $A_{r,N}$ with respect to that of $P_r$, and Figure \ref{convergence}(b) suggest that all the eigenvalues of $P_r$ converge to 1 as $r\to 1^-$, so that for $r=1$ the spectrum of $P_r$ would consist of the eigenvalues 0 and 1 and of the purely continuous spectrum $(0,1)$. Hence our numerical results support the conjecture given in \cite{Is}.
 
\begin{figure}[h]
    \begin{center}
    \subfigure[]
    {\includegraphics[width=6.0cm,keepaspectratio]{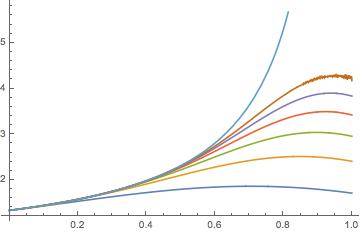}}
    \hspace{0.7cm}
    \subfigure[]
    {\includegraphics[width=6.0cm,keepaspectratio]{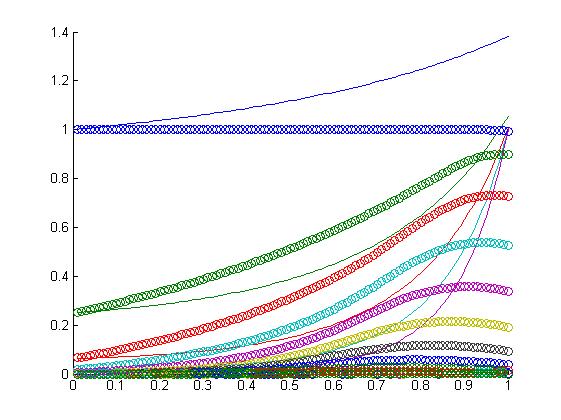}}
    \caption{(a) The trace of the operators $P_r$ (upper most curve) and in increasing order, the trace of the matrix $A_{r,N}$ for $N=10,20,30,40,50,60$, as functions of $r\in (0,1)$. (b) The eigenvalues of $A_{r,N}$ for $N=50$ compared with the sums of the eigenvalues of $M_r$ and $N_r$ for $k=1,3,5,7,9$, as  functions of $r\in (0,1)$.}
    \label{convergence}
    \end{center}
\end{figure}


\end{document}